\newtheorem{Thm}{Theorem}[section]
\newtheorem{Lem}[Thm]{Lemma}
\newtheorem{Cor}[Thm]{Corollary}
\newtheorem{Prop}[Thm]{Proposition}
\newtheorem{``Conj"}[Thm]{``Conjecture"}
\theoremstyle{remark}
\theoremstyle{definition}
\newtheorem{Def}[Thm]{Definition}
\newtheorem*{ack}{Acknowledgments}
\newcommand{\Proj}{\mathop{\mathrm{Proj}}\nolimits}
\newcommand{\DF}{\mathop{\mathrm{DF}}\nolimits}
\begin{document}

\title[Remarks on test configurations]
{On Parametrization, optimization and triviality of test configurations}

\author{Yuji Odaka}
\address{Research Institute for Mathematical Sciences, Kyoto University,
Kyoto, Japan}
\email{yodaka@kurims.kyoto-u.ac.jp}


\maketitle

\begin{abstract}
We give a parametrization of test configurations 
in the sense of Donaldson via spherical buildings, 
and show the existence of ``optimal" destabilizing test configurations 
for unstable varieties, in the wake of Mumford and Kempf. 
We also give an account of the recent slight amendment to definition of K-stability after 
Li-Xu, from two other viewpoints: 
from the one parameter subgroups and 
from the author's blow up formalism. 
\end{abstract}
\tableofcontents

\section{Introduction}

The concept of test configurations of polarized varieties is introduced by Donaldson \cite{Don02} to generalize K-stability after Tian \cite{Tia97} which is a certain version of GIT stability \cite{Mum65} for polarized varieties. 
Indeed, to each test configuration, certain GIT weights correspond, 
which is a motivation of its introduction. 
Precisely speaking, 
they can be regarded as certain {\it equivalent classes} of 
one parameter subgroups of the general linear group ${\rm GL}(H^0(X,L^{\otimes r}))$ 
for a given polarized variety $(X,L)$. 

The aim of this short notes is twofold. First is to point out that, roughly speaking, 
dividing sets of test configurations further, all equivalent classes can be parametrized by a certain simplicial complex, the so-called {\it spherical building} invented by J.~Tits (cf.\ \cite{Tit74}). Moreover, thanks to a theorem of Rousseau \cite{Rou78} and Kempf \cite{Kem78}, originally known as the Tit's center conjecture, we conclude the existence of ``optimal" destabilizing test configurations. 
The outcome is a geometric analogue of the Harder-Narasimhan filtrations of unstable sheaves on a fixed  variety.  (The analogy becomes clearer after recent work of \cite{GSZ11}. Namely, it claims that the filtration coincides the optimal destabilizing objects in the sense of \cite{Rou78}, 
\cite{Kem78}. ) We remark here that the spherical building has already been discussed in 
the Mumford's classic \cite{Mum65} 
to give applications to stability problem although he used the 
word ``flag complex" with completely same meaning. Thus, we should note that 
our statements are essentially  consequences of \cite{Mum65}, \cite{Rou78}, \cite{Kem78}, and only some small pieces of arguments, so that they are not essentially new. However, it seems to the author that the results have not been recognized by the mathematical community of this area,  which is one motivation for him of writing this notes. 

The second objective of this notes is to explain from our point of view, 
about the pathological (almost trivial but not trivial) 
test configurations whose existence is pointed out in \cite[subsection 2.2]{LX11}. 
They addressed that we need to slightly modify the definition of K-stability by 
regarding them as trivial or ``throwing them away". For details, 
see subsection \ref{patho.ss}. In that subsection, 
we also explain how the blow up formalism \cite{Od11} 
works for that amended notion of K-stability. Moreover, 
the sub-locus of the spherical building which parametrizes 
those test configurations of specific kind will be described as well. 

\section{Statements}

We work over an arbitrary algebraically closed field $k$. 
We use the terminologies of line bundles and invertible sheaves interchangeably. 
By a polarized variety $(X,L)$, we mean that $X$ is a projective equidimensional 
variety and $L$ is an ample line bundle on it. 
At the beginning, 
we recall the definition of (semi) test configurations and 
introduce certain (equivalence) relations for test configurations as follows. 

\begin{Def}\label{tc}
A \textit{test configuration} (resp.\ \textit{semi test configuration}) for a polarized variety $(X,L)$ is a quasi-projective variety $\mathcal{X}$ with 
an invertible sheaf $\mathcal{L}$ on it with: 
\begin{enumerate}
\item{a $\mathbb{G}_{m}$ action on $(\mathcal{X},\mathcal{L})$}
\item{a projective flat morphism $p\colon \mathcal{X} \rightarrow \mathbb{A}^{1}$}
\end{enumerate}
such that $p$ is $\mathbb{G}_{m}$-equivariant for the usual action on $\mathbb{A}^{1}$: 
\begin{align*}
\mathbb{G}_{m}\times \mathbb{A}^{1}&& \longrightarrow&& \mathbb{A}^{1}\\
                          (t,x)    && \longmapsto    &&    tx,      
\end{align*}
$\mathcal{L}$ is relatively ample (resp.\ relatively semi ample), 
and $(\mathcal{X},\mathcal{L})|_{p^{-1}(\mathbb{A}^{1}\setminus \{0\})}$ is $\mathbb{G}_{m}$-equivariantly isomorphic 
to $(X,L^{\otimes r})\times (\mathbb{A}^{1}\setminus \{0\})$ for some positive integer $r$, called \textit{exponent}, 
with the natural action of $\mathbb{G}_{m}$ on the latter and the trivial action on the former. 
\end{Def}

\begin{Def}\label{eq.tc}
For a given polarized variety $(X,L)$, we introduce the following three 
pre-equivalent conditions. We call two test configurations $(\mathcal{X}_i,
\mathcal{L}_i) (i=1,2)$

\begin{itemize}
\item {\it P-pre-equivalent} if $(\mathcal{X}_2,\mathcal{L}_2)$ is a base 
change of $(\mathcal{X}_1,\mathcal{L}_1)$ by the power morphism $t\mapsto t^m$ of $\mathbb{A}^1\rightarrow \mathbb{A}^1$ with some $\mathbb{Z}_{>0}$. 

\item {\it Q-pre-equivalent} if $(\mathcal{X}_2,\mathcal{L}_2)=(\mathcal{X}_1,\mathcal{L}_1^{\otimes l})$ equivariantly with $l \in \mathbb{Z}_{>0}$. 

\item {\it R-equivalent} when there is a $\mathbb{G}_m$-equivariant 
isomorphism $\phi \colon \mathcal{X}_1\cong \mathcal{X}_2$ over $\mathbb{A}^1$ 
with $\mathcal{L}_1\cong \phi^*\mathcal{L}_2$. To say in other (but a little rough)  words, it is exactly when they are isomorphic ``except for the linearizations". 

\end{itemize}

Consider an equivalent relation among test configurations which is 
generated by P-pre-equivariance relation and R-equivariance relation 
(resp.\ P-pre-equivariance relation, Q-pre-equivariance relation and R-equivalence 
relation) and call it T-equivalence relation (resp.\ U-equivalence relation). 

A T-equivalent class of test configuration $(\mathcal{X},\mathcal{L})$ will be called a {\it test degeneration} and we write it as $\overline{(\mathcal{X},\mathcal{L})}$. Note that, by the base change, central fiber does not change scheme-theoritically 
(including polarization), which is the origin of this name. 
Note that, exponents are well-defined for each test degeneration. 

A U-equivalent class of test configuration 
will be called a {\it test class}. Of course, a test degeneration gives a test class. 
\end{Def}

Let us assume the polarization $L$ attached to $X$ is {\it very} ample. 
Then, we have the following. 
We explain the rigorous meaning in subsections \ref{ss.bld}, \ref{pr.22}. 

\begin{Thm}\label{tc.bld}
Test degenerations $\overline{(\mathcal{X},\mathcal{L})}$ 
with very ample $\mathcal{L}$ of exponent $1$ 
are parametrized by the set of rational points 
$|\Delta({\rm GL}(H^0(X,L)))|_{\mathbb{Q}}$ of the 
spherical building $\Delta({\rm GL}(H^0(X,L))$. 
\end{Thm}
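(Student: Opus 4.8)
The plan is to set up a bijection between test degenerations (with very ample $\mathcal{L}$ of exponent $1$) and rational points of the spherical building $\Delta(\GL(H^0(X,L)))$. Since $L$ is very ample, the embedding $X \hookrightarrow \mathbb{P}(H^0(X,L)^\vee) =: \mathbb{P}^N$ is fixed, and a test configuration of exponent $1$ with very ample $\mathcal{L}$ should be recovered as the flat limit (Zariski closure) of the orbit of $X$ under a one-parameter degeneration inside the Hilbert scheme. So the first thing I would do is recall the classical fact, essentially from \cite{Mum65}, that every such test configuration arises from a one-parameter subgroup $\lambda \colon \mathbb{G}_m \to \GL(H^0(X,L))$: one takes the closure of $\{(\lambda(t)\cdot X, \lambda(t)\cdot \mathcal{O}(1))\}_{t \neq 0}$ in $\Hilb \times \mathbb{A}^1$, and conversely every test configuration of this kind, being $\mathbb{G}_m$-equivariantly embedded in projective space over $\mathbb{A}^1$, produces a linearized one-parameter subgroup. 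This gives a surjection from one-parameter subgroups of $\GL(H^0(X,L))$ onto test configurations of the stated form.

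The next step is to identify precisely which data on the one-parameter subgroup side descend to the chosen equivalence relation, namely T-equivalence. Here I would carefully match the three pre-equivalences of Definition \ref{eq.tc} against the standard equivalences on one-parameter subgroups used in the theory of spherical buildings. The base-change by $t \mapsto t^m$ (P-pre-equivalence) corresponds exactly to replacing $\lambda$ by $\lambda^m$, i.e.\ to rescaling the one-parameter subgroup by a positive integer; this is what forces us to pass to \emph{rational} rather than integral points, and to work projectively (up to positive scaling). The R-equivalence, which identifies configurations isomorphic ``except for the linearizations'', corresponds to conjugating $\lambda$ by an element of the automorphism/parabolic structure, so that only the flag-theoretic data of $\lambda$ survive. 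Keeping the exponent fixed at $1$ removes the Q-direction, which is exactly why the target is the building rather than a cone over it. The key technical point I would verify is that two one-parameter subgroups give T-equivalent test configurations precisely when they determine the same point of the spherical building after rational scaling.

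With these matchings in hand, I would invoke the standard description of the spherical building $\Delta(\GL(V))$: its points are the weighted flags in $V$, equivalently the equivalence classes of one-parameter subgroups of $\GL(V)$ under positive scaling and the combinatorial (apartment/chamber) structure, with rational points $|\Delta(\GL(V))|_{\mathbb{Q}}$ corresponding to rationally-weighted flags. A one-parameter subgroup $\lambda$ determines a filtration of $H^0(X,L)$ by weight spaces together with a rational weight on each step; this is precisely a rational point of the building. So the assignment $\lambda \mapsto (\text{weighted flag})$ factors through T-equivalence, and I would check it is both well-defined and injective on T-equivalence classes, using the orbit-closure description to recover the test configuration from the weighted flag.

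The main obstacle, I expect, will be the well-definedness and injectivity on the \emph{geometric} (test configuration) side rather than the purely group-theoretic side: I must show that two one-parameter subgroups with the same weighted flag produce $\mathbb{G}_m$-equivariantly isomorphic flat degenerations of $X$ (so that the map is well-defined on T-equivalence classes), and conversely that T-equivalent test configurations have the same associated weighted flag (injectivity). The forward direction requires that the flat limit of the orbit depends only on the filtration of $H^0(X,L)$ and the weights, not on the particular torus realizing it — this is where I would lean on the orbit-closure/Hilbert scheme computation and on the fact that the associated graded of the coordinate ring, which determines the central fiber, depends only on the weighted filtration. The delicate part is handling the relative ampleness and very-ampleness hypotheses on $\mathcal{L}$ to ensure the configuration is genuinely recovered inside $\mathbb{P}^N \times \mathbb{A}^1$ and that no information beyond the weighted flag is retained; I would treat surjectivity and injectivity by translating each back and forth through the explicit embedding, and reconcile the exponent-$1$ and very-ample normalizations with the rational-scaling ambiguity that produces $|\Delta|_{\mathbb{Q}}$.
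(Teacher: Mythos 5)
Your overall strategy coincides with the paper's: both pass through the fact (essentially \cite[Proposition 3.7]{RT07}) that every exponent-$1$ test configuration with very ample $\mathcal{L}$ is the orbit-closure/DeConcini--Procesi family of a one parameter subgroup of ${\rm GL}(H^0(X,L))$, and then match the equivalence relations on test configurations against Mumford's three relations defining $|\Delta(G)|_{\mathbb{Q}}$. However, your dictionary between the two sides contains a genuine misattribution, and one of Mumford's three relations is never accounted for. You assign R-equivalence (``isomorphic except for the linearizations'') to conjugation of $\lambda$ by elements of the parabolic $P(\lambda)$. In the paper these play entirely different roles: parabolic conjugation is relation (ii) and corresponds to changing the $\mathbb{G}_m$-equivariant trivialization $f\colon p_*\mathcal{L}\cong\mathcal{O}^{\oplus(M+1)}$ (the point from \cite{Don05} that $\lambda_{p(t)\circ f}=p\lambda_f p^{-1}$), so that parabolically conjugate one parameter subgroups give the \emph{same} test configuration, linearization included --- no equivalence relation on test configurations is needed to absorb them. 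R-equivalence, by contrast, changes only the linearization of $\mathcal{L}$, i.e.\ twists by a character of $\mathbb{G}_m$, and therefore corresponds to relation (iii): $\lambda\sim\lambda\xi$ for $\xi$ a central one parameter subgroup.

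This is not merely a labelling issue, because relation (iii) is absent from your proposal altogether. The spherical building of ${\rm GL}(V)$ is by construction the building of the semisimple part, so one must quotient by the central one parameter subgroups; with only your relations (powers and parabolic conjugation) the quotient of the set of one parameter subgroups is strictly larger than $|\Delta(G)|_{\mathbb{Q}}$, retaining the extra ``central direction.'' Concretely, $\lambda$ and $\lambda\cdot(t\mapsto t^n\,\mathrm{id})$ define the same point of the building but non-isomorphic linearized test configurations, which are identified only by R-equivalence; your claimed ``key technical point'' that one parameter subgroups with the same weighted flag produce $\mathbb{G}_m$-equivariantly isomorphic configurations fails for exactly this pair unless the isomorphism is allowed to forget the linearization. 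To repair the argument you need the three-way match: identical test configuration $\leftrightarrow$ parabolic conjugation (via the trivialization argument of \cite{Don05}), P-pre-equivalence $\leftrightarrow$ taking powers, and R-equivalence $\leftrightarrow$ central twist.
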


\noindent
Here, the spherical building $\Delta({\rm GL}(H^0(X,L))$ is 
a certain simplicial complex. Its geometric realization 
$|\Delta({\rm GL}(H^0(X,L))|$, which we donote as $\Delta_r$, with a natural topology is 
known to have a certain natural compatible metric structure 
(cf.\ \cite[Chapter 12]{AB08}). 
We will give the definition of the building in subsection \ref{ss.bld}. 
We will also see that although it is non-compact with respect to the usual topology, 
{\it it will be compact} with certain ``much stuck" topology to be introduced later. 
Without fixing exponents, we thus also have: 

\begin{Cor}\label{tc.bld.c}
Test classes of $(X,L)$ are parametrized by 
$\Delta_{\infty}:=\varinjlim |\Delta({\rm GL}(H^0(X,L^{\otimes r})))|_{\mathbb{Q}}$. 
\end{Cor}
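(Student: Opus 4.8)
The plan is to deduce Corollary \ref{tc.bld.c} from Theorem \ref{tc.bld} by analyzing how the three pre-equivalence relations interact as we vary the exponent $r$. First I would observe that Theorem \ref{tc.bld} already handles the exponent-$1$ case: test degenerations with very ample $\mathcal{L}$ of exponent $1$ are in bijection with $|\Delta(\mathrm{GL}(H^0(X,L)))|_{\mathbb{Q}}$. The key point is that a \emph{test class} is a U-equivalence class, and U-equivalence is generated by T-equivalence (which is what test degenerations record) together with the Q-pre-equivalence relation $(\mathcal{X},\mathcal{L})\mapsto(\mathcal{X},\mathcal{L}^{\otimes l})$. Passing from $\mathcal{L}$ to $\mathcal{L}^{\otimes l}$ multiplies the exponent by $l$, so Q-pre-equivalence is precisely the mechanism that identifies test degenerations living at different exponents. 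Thus the set of test classes should be exactly the colimit, over all $r$, of the sets of exponent-$r$ test degenerations, with transition maps induced by $\mathcal{L}\mapsto\mathcal{L}^{\otimes l}$.

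Next I would make the transition maps explicit on the building side. Replacing $L$ by $L^{\otimes r}$ sends $(X,L)$ to a very ample polarization via the $r$-th Veronese embedding, so one works inside $\mathrm{GL}(H^0(X,L^{\otimes r}))$; Theorem \ref{tc.bld} applied to $(X,L^{\otimes r})$ then parametrizes exponent-$1$ test degenerations of $(X,L^{\otimes r})$, which are exactly the exponent-$r$ test degenerations of $(X,L)$, by $|\Delta(\mathrm{GL}(H^0(X,L^{\otimes r})))|_{\mathbb{Q}}$. The Q-pre-equivalence relation $\mathcal{L}\mapsto\mathcal{L}^{\otimes l}$ then corresponds to a map $|\Delta(\mathrm{GL}(H^0(X,L^{\otimes r})))|_{\mathbb{Q}}\to|\Delta(\mathrm{GL}(H^0(X,L^{\otimes rl})))|_{\mathbb{Q}}$ of buildings, induced by the Veronese/multiplication homomorphism $H^0(X,L^{\otimes r})^{\otimes l}\to H^0(X,L^{\otimes rl})$ on weight filtrations. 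I would check that these maps are compatible (satisfy the cocycle condition $r\to rl\to rlm$ matching $r\to rlm$) so that the direct limit $\Delta_\infty=\varinjlim_r|\Delta(\mathrm{GL}(H^0(X,L^{\otimes r})))|_{\mathbb{Q}}$ is well-defined, the index set of positive integers being directed under divisibility.

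The final step is to verify that the induced map from $\varinjlim$ to the set of test classes is a bijection. Surjectivity is essentially automatic: every test configuration has some exponent, can be replaced (up to T-equivalence and after further $\mathcal{L}\mapsto\mathcal{L}^{\otimes l}$) by one with very ample $\mathcal{L}$, and so is hit by some stage of the colimit. For injectivity I would argue that if two test degenerations become U-equivalent, then after a common refinement of exponents (using P- and Q-pre-equivalence to reach a shared $r$) they already become T-equivalent, hence represent the same point at that stage of the direct limit. Concretely, two classes mapping to the same test class are identified by a finite zig-zag of the generating relations, and each Q-step is recorded by the transition map, each P-step by the base change $t\mapsto t^m$ which does not alter the building point in $|\Delta|_{\mathbb{Q}}$, so the two points agree in the colimit.

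I expect the main obstacle to be the injectivity/well-definedness bookkeeping: one must show that T-equivalence and Q-pre-equivalence do not interact in a way that collapses \emph{more} than the naive colimit predicts, i.e.\ that the equivalence generated by all three relations on the union is exactly the filtered colimit of the exponent-fixed pictures. The delicate point is confirming that the Veronese transition maps on the buildings are genuinely injective on rational points (so no spurious identifications are introduced), and that taking $\mathcal{L}^{\otimes l}$ interacts correctly with the normalization and very-ampleness hypotheses that Theorem \ref{tc.bld} requires at each stage. Once the transition maps are shown to be the correct rational simplicial maps of buildings, the statement follows formally from the universal property of the direct limit.
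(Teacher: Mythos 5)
Your proposal is correct and follows exactly the route the paper intends: the paper derives this corollary directly from Theorem \ref{tc.bld} applied at each exponent $r$, with the inductive system given by $(\mathcal{X},\mathcal{L})\rightsquigarrow(\mathcal{X},\mathcal{L}^{\otimes l})$, i.e.\ your Veronese transition maps realizing Q-pre-equivalence. The paper treats this as immediate and records no further details, so your elaboration of the surjectivity/injectivity bookkeeping is a faithful filling-in of the same argument rather than a different approach.
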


\noindent
Here, the inductive system is given by taking power of polarization of test configurations 
i.e., $(\mathcal{X},\mathcal{L})\rightsquigarrow (\mathcal{X},\mathcal{L}^{\otimes l})$ 
with $l\in \mathbb{Z}_{>0}$. Note that these $\Delta_r$ and $\Delta_\infty$ 
can be expected to correspond to space of geodesics of (finite-dimensional or 
infinite-dimensional) symmetric spaces. 

For the basic of stability, we refer to \cite{Mum65}, \cite{Od11}. 
Based on the continuous (and piecewise-linear) behavior of GIT weights \cite[Prop 2.14]{Mum65}, we see the continuity of normalized Chow weights: 

\begin{Prop}\label{Cwt}
The normalized Chow weights form continous function on $|\Delta({\rm GL}(H^0(X,L^{\otimes r})))|_{\mathbb{Q}}$ with respect to the usual topology. 
On the other hand, it is a lower semi-continuous function with regards to 
the ``stuck" topology. 
\end{Prop}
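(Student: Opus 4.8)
The plan is to reduce everything to the piecewise-linear behaviour of Hilbert--Mumford weights recorded in \cite[Prop.~2.14]{Mum65}, and then to read off the two topological statements from the way that behaviour interacts with, respectively, the metric topology and the stuck topology on the building. First I would fix notation: put $V=H^0(X,L^{\otimes r})$ and $N=\dim_k V$. After choosing an apartment, a rational point of $|\Delta(\GL(V))|_{\mathbb{Q}}$ is represented by a one-parameter subgroup $\lambda$ of $\GL(V)$, i.e.\ by an integral weight vector $(a_1,\dots,a_N)$ of a maximal torus $T$ diagonalising $\lambda$; fixing once and for all a $\GL(V)$-invariant inner product I write $\|\lambda\|$ for the associated norm and set $\overline{w}_{\mathrm{Chow}}(\lambda)=w_{\mathrm{Chow}}(\lambda)/\|\lambda\|$, where $w_{\mathrm{Chow}}(\lambda)$ is the Mumford weight of the Chow point of $X\hookrightarrow\mathbb{P}(V^{\vee})$ with respect to $\lambda$.

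The first point to check is that $\overline{w}_{\mathrm{Chow}}$ is honestly a function on the geometric realization: it is invariant under the positive scaling $\lambda\mapsto c\lambda$ that collapses a cocharacter ray to a point of $|\Delta|$, and it is independent of the chosen apartment. Both follow because $w_{\mathrm{Chow}}$ depends on $\lambda$ only through the induced weighted filtration of $V$ and is positively homogeneous of degree one in the weights, so that $\overline{w}_{\mathrm{Chow}}$ is homogeneous of degree zero and intrinsic.

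For the usual topology the key input is that, on a single apartment $A\cong S^{N-2}$ (see \ref{ss.bld}) and after the choice of basis diagonalising $T$, the weight $w_{\mathrm{Chow}}(\lambda)$ is a finite maximum $\max_{\beta\in S}\langle\beta,\lambda\rangle$ of linear functionals, the index set $S$ being the (fixed, once $A$ is fixed) set of ambient weights occurring in the Chow point; this is exactly the convex, piecewise-linear, degree-one behaviour of \cite[Prop.~2.14]{Mum65}. Such a support function is Lipschitz, so after dividing by $\|\lambda\|$ it descends to a continuous, degree-zero function on the closed apartment. Since any two points of a building lie in a common apartment and the values agree on shared faces (again by the intrinsic, filtration-only dependence), these apartmentwise continuous functions glue to a continuous function on $|\Delta(\GL(V))|_{\mathbb{Q}}$ for the usual topology, which proves the first assertion.

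For the second assertion I would first recall the precise definition of the stuck topology. It is strictly coarser than the metric one and is what compactifies the infinitely-many-apartments realization, by letting a finer weighted flag specialize to each of its coarser faces, $\lambda\rightsquigarrow\lambda_\infty$, i.e.\ by merging adjacent weight levels. Lower semicontinuity then amounts to showing that each upper level set $\{\overline{w}_{\mathrm{Chow}}>c\}$ is stuck-open; concretely, that along any stuck-specialization one has $\overline{w}_{\mathrm{Chow}}(\lambda_\infty)\le\liminf\overline{w}_{\mathrm{Chow}}(\lambda_n)$, the value being allowed to drop, but not to jump up, at the more degenerate limit. I would derive this from the support-function description together with the monotone way in which the maximum over $S$ behaves as the weighted flag coarsens. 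The hard part will be exactly this step: pinning down stuck-convergence sharply enough to control the surviving functionals, and verifying that the unavoidable one-sided discontinuity is the lower one under the chosen sign convention for $w_{\mathrm{Chow}}$. The payoff is the expected one in the tradition of \cite{Mum65}: a lower semicontinuous function on the stuck-compact building attains its infimum, which is the mechanism behind the ``optimal'' destabilizing test configurations.
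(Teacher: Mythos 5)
Your first assertion is handled essentially as in the paper: both arguments rest on Mumford's description \cite[Chapter 2, Proposition 2.14]{Mum65} of the weight as a finite maximum of linear functionals with rational coefficients, from which continuity of the normalized weight on each apartment, and hence (after checking well-definedness on overlaps, which you do) on all of $|\Delta(\GL(H^0(X,L^{\otimes r})))|_{\mathbb{Q}}$ in the usual topology, follows at once. That half of the proposal is fine.

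The second half has a genuine gap, and it starts with a misreading of the ``stuck'' topology. As defined in subsection \ref{ss.bld}, that topology is the quotient topology induced by $\phi\colon S(T,\mathbb{R})\times (G/N(T))_k\rightarrow |\Delta(G)|$ from the product of the \emph{Euclidean} topology on the apartment factor and the \emph{Zariski} topology on $(G/N(T))_k$. So within a fixed apartment nothing is coarsened: the degeneration that the stuck topology allows is a Zariski-specialization of the apartment itself, i.e.\ of the maximal torus $gTg^{-1}$, not a ``merging of adjacent weight levels'' of a single cocharacter as you describe. Consequently the mechanism you propose (monotonicity of the max over $S$ as the weighted flag coarsens) is aimed at the wrong degeneration, and the step you yourself flag as ``the hard part'' is exactly the missing ingredient. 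What the paper uses instead is the second half of Mumford's statement: writing $\mu_{gTg^{-1}}(\lambda)=\max\{l_i(\bar\lambda)\mid i\in I(g)\}$, the state map $I$ is lower semicontinuous in the sense that $\{g\in G\mid J\subset I(g)\}$ is Zariski open for every $J$. Taking $J=I(g_0)$ shows that for $g$ in a Zariski neighborhood of $g_0$ the maximum is taken over a superset of $I(g_0)$, so the weight can only jump \emph{down} at the special apartment; combined with Euclidean continuity in the $\lambda$-direction this gives lower semicontinuity for the product topology upstairs and hence for the quotient (stuck) topology. Without this openness statement about states (due to Mumford, see also Kempf's ``states'' in \cite{Kem78}), your argument does not close.
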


\noindent
For the definitions of stuck topology and normalized Chow weights, see subsection \ref{ss.bld} and section \ref{s.wt} respectively. 
We also have the following abstract existence theorems 
of ``optimal" destabilizing objects, thanks to \cite{Rou78}, \cite{Kem78}. 

\begin{Cor}[{of Theorem \ref{tc.bld}}]\label{C.DO}
If $[X\subset \mathbb{P}]$ is Chow-unstable, it has a unique maximally destabilizing  
test degeneration. 
\end{Cor}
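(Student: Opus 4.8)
The plan is to reduce the assertion to the optimization theory of Kempf and Rousseau by passing, via Theorem \ref{tc.bld}, from test degenerations to the normalized Chow weight on the spherical building. Write $G=\GL(H^0(X,L))$ and let $\mathbb{P}$ be the projective space in which $[X\subset\mathbb{P}]$ sits, so that the Chow point $\mathrm{Chow}(X)$ lives in the $G$-representation $V$ attached to Chow forms of the relevant degree. Every rational point $\xi\in|\Delta(G)|_{\mathbb{Q}}$ is the direction of a rational one parameter subgroup $\lambda\colon\mathbb{G}_m\to G$, determined up to positive rescaling and the simplicial identifications of the building, and the associated exponent-$1$ test degeneration is the flat limit of $X\subset\mathbb{P}$ under the flow of $\lambda$. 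Under this dictionary the normalized Chow weight at $\xi$ equals, up to the ambient sign convention, Mumford's numerical invariant $\mu(\mathrm{Chow}(X),\lambda)$ divided by the length $\|\lambda\|$ computed in a fixed $G$-invariant integral norm on the cocharacter lattice; as this ratio is homogeneous of degree $0$ in $\lambda$ it descends to the function on the building whose regularity is recorded in Proposition \ref{Cwt}.

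Next I would rephrase the hypothesis through the Hilbert--Mumford criterion: $[X\subset\mathbb{P}]$ is Chow-unstable precisely when some one parameter subgroup satisfies $\mu(\mathrm{Chow}(X),\lambda)<0$, that is, when the destabilizing locus
\[
\Sigma:=\{\,\xi\in|\Delta(G)|_{\mathbb{Q}}:\ \text{the normalized Chow weight at }\xi\text{ is negative}\,\}
\]
is nonempty. By the piecewise-linear, convex behaviour of GIT weights along geodesics (Proposition \ref{Cwt} together with \cite[Prop 2.14]{Mum65}), this $\Sigma$ is a convex subcomplex of the building, precisely Mumford's flag complex of the unstable point $\mathrm{Chow}(X)$.

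The core of the proof is then the theorem of Kempf \cite{Kem78}, reformulated by Rousseau \cite{Rou78} as the Tits center conjecture for $\Delta(G)$, applied to $\Sigma$. It yields that the normalized numerical function attains its infimum over $\Sigma$, and that the minimizing one parameter subgroup is unique up to conjugation by the parabolic $P(\lambda)$ it canonically determines. Since conjugacy under $P(\lambda)$ is exactly the ambiguity collapsed in passing to a point of $\Delta(G)$, this minimizer defines a single point $\xi_{\mathrm{opt}}\in|\Delta(G)|$; moreover Kempf's construction is valid over the arbitrary algebraically closed field $k$ and produces a \emph{rational} optimizer, whence $\xi_{\mathrm{opt}}\in|\Delta(G)|_{\mathbb{Q}}$. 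Transporting $\xi_{\mathrm{opt}}$ back through Theorem \ref{tc.bld} produces a unique exponent-$1$ test degeneration with very ample $\mathcal{L}$ minimizing the normalized Chow weight, i.e.\ the maximally destabilizing test degeneration.

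I expect the main obstacle to be the identifications in the first and third paragraphs rather than any new geometric input: one must verify that the norm entering the definition of the normalized Chow weight agrees, up to the scalar the building tolerates, with Kempf's length function, and that the notion of ``maximally destabilizing'' for test degenerations is literally Kempf's optimality for $\mathrm{Chow}(X)$. Once these matchings are in place, existence and uniqueness are read off directly from \cite{Kem78}, \cite{Rou78}.
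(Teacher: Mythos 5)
Your proposal is correct and follows essentially the same route as the paper, which disposes of this corollary in one line: it ``simply follows from [Rou78], [Kem78] combined with Theorem \ref{tc.bld}'', i.e.\ transport Kempf--Rousseau's unique optimal destabilizing one parameter subgroup (the Tits center conjecture) through the building parametrization of test degenerations. Your extra care about matching the normalized Chow weight with Kempf's length-normalized numerical invariant and about rationality of the optimizer is exactly the implicit content the paper leaves to the reader.
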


\begin{Cor}\label{K.DO}
If $(X,L)$ is K-unstable, it has a unique 
maximally destabilizing series of test degenerations. 
\end{Cor}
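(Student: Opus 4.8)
The plan is to reduce the K-unstable case to the Chow-unstable case of Corollary \ref{C.DO}, applied at every sufficiently large exponent, and then to assemble the resulting finite-level optima into a single object in the limit $\Delta_\infty$ of Corollary \ref{tc.bld.c}. The organizing principle is that the Donaldson-Futaki invariant $\DF$ governing K-stability is the asymptotic ($r\to\infty$) limit of the normalized Chow weights attached to the $r$-th Veronese re-embeddings of $(X,L)$.

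First I would make precise the comparison between $\DF$ and the normalized Chow weight, which I denote $\mathrm{Ch}_r$ on the rational building $|\Delta(\GL(H^0(X,L^{\otimes r})))|_{\mathbb{Q}}$. For a test degeneration of exponent dividing $r$, expanding the total $\mathbb{G}_m$-weight on $H^0(\mathcal{X}_0,\mathcal{L}_0^{\otimes k})$ as a polynomial in $k$ and comparing its leading coefficients with the Hilbert polynomial of $(X,L)$, one obtains that $\mathrm{Ch}_r$ converges, after the standard normalization, to $\DF$ as $r\to\infty$. Consequently $(X,L)$ is K-unstable precisely when these functionals are eventually negative along the inductive system, and the search for an optimal K-destabilizer becomes the minimization of the limiting functional $\DF$ over $\Delta_\infty$.

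Next I would perform the finite-level optimization. For each large $r$ the embedding $[X\subset\mathbb{P}(H^0(X,L^{\otimes r})^{\vee})]$ is Chow-unstable, so Corollary \ref{C.DO} supplies a unique maximally destabilizing test degeneration $v_r\in|\Delta(\GL(H^0(X,L^{\otimes r})))|_{\mathbb{Q}}$: concretely $v_r$ is the Kempf-Rousseau optimum of the piecewise-linear functional $\mathrm{Ch}_r$, convex along each apartment, whose uniqueness is Tits' center conjecture as proved in \cite{Rou78}, \cite{Kem78}. Since the transition maps of the inductive system are induced by $\mathcal{L}\rightsquigarrow\mathcal{L}^{\otimes l}$ and intertwine $\mathrm{Ch}_r$ with $\mathrm{Ch}_{rl}$, the optima $v_r$ form a compatible \emph{series} $(v_r)_r$, that is, a well-defined point of $\Delta_\infty$ optimizing $\DF$. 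Maximality and uniqueness then follow by combining the lower semi-continuity of $\mathrm{Ch}_r$ in the stuck topology with the compactness of the building in that topology (Proposition \ref{Cwt}): the infimum of the $\DF$-approximating functional is attained, and the Kempf-Rousseau uniqueness at each finite level, propagated through the system, pins down a canonical simplex with a rational barycentric point at every level.

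The main obstacle is controlling the passage to the limit $r\to\infty$. A priori the optima $v_r$ at distinct levels need not be represented by one and the same test degeneration, which is exactly why the optimal K-destabilizing object is a \emph{series} rather than a single test degeneration. One must verify that the Kempf-Rousseau optima refine coherently along the inductive system, so that $(v_r)_r$ is genuinely well-defined in $\Delta_\infty$, and that this limit optimizes the asymptotic functional $\DF$ and not merely each finite truncation $\mathrm{Ch}_r$. This coherence is the building-theoretic shadow of the stabilization of the Harder-Narasimhan filtration, matching the comparison with \cite{GSZ11} emphasized in the introduction.
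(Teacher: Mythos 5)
Your core reduction is exactly the paper's argument: $(X,L)$ K-unstable implies $X\hookrightarrow \mathbb{P}(H^0(X,L^{\otimes r}))$ is Chow-unstable for all sufficiently large $r$ (by the asymptotic comparison of normalized Chow weights with the Donaldson--Futaki invariant, \cite[section 3]{RT07}), and then Corollary \ref{C.DO} applied at each such $r$ produces a unique maximally destabilizing test degeneration $v_r$. The paper stops there: the ``maximally destabilizing \emph{series} of test degenerations'' is, by definition, just this sequence $(v_r)_r$ of finite-level Kempf--Rousseau optima, one for each sufficiently large exponent, and existence and uniqueness are inherited level by level from Corollary \ref{C.DO}. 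Up to this point your proposal is correct and essentially identical to the intended proof.

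The gap is in your third and fourth paragraphs, where you assert that the $v_r$ ``form a compatible series, that is, a well-defined point of $\Delta_\infty$ optimizing $\DF$,'' on the grounds that the transition maps $\mathcal{L}\rightsquigarrow\mathcal{L}^{\otimes l}$ ``intertwine $\mathrm{Ch}_r$ with $\mathrm{Ch}_{rl}$.'' This intertwining is not established and should not be expected: the normalized Chow weight at level $rl$ is a functional on the much larger building $|\Delta(\GL(H^0(X,L^{\otimes rl})))|_{\mathbb{Q}}$, it only agrees with $\mathrm{Ch}_r$ asymptotically (their difference is a lower-order but nonzero correction), and the Kempf--Rousseau optimum of a functional is not preserved under perturbation of the functional. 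So the image of $v_r$ under the transition map need not equal $v_{rl}$, and no point of $\Delta_\infty$ is canonically singled out by your argument. You correctly identify this coherence as ``the main obstacle'' but then leave it unverified, which means the object you claim to construct is not shown to exist. The fix is simply to weaken the conclusion to what the statement actually asserts and what the paper means: the optimal K-destabilizing object is the \emph{series} $(v_r)_{r\gg 0}$ itself, with no compatibility claimed across levels; uniqueness is the uniqueness of each $v_r$ separately. Whether the $v_r$ stabilize or cohere in $\Delta_\infty$ is exactly the open issue the paper defers to the frameworks of \cite{Don10}, \cite{Sze11}.
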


\noindent
For the definitions of these ``maximally destabilizing" objects, we will 
explain in section \ref{s.wt} but they are essentially with regard to the restricted 
action of ${\rm SL}(H^0(X,L^{\otimes r}))$. 
Note that, since if $(X,L)$ is K-unstable, 
$X\hookrightarrow \mathbb{P}(H^0(X,L^{\otimes r}))$
\footnote{In the present notes and other former papers of the author, 
the notation for the projectivization $\mathbb{P}(-)$ of 
vector space is the contravariant one, following Grothendieck. }
is Chow unstable for 
sufficiently large $r\in \mathbb{Z}_{>0}$ (cf.\ e.g. \cite[section 3]{RT07}), 
Corollary \ref{K.DO} simply follows from Corollary \ref{C.DO}. 
It would be interesting to know the behavior of Donaldson-Futaki invariants 
on $\Delta_{\infty}$ or to compare Corollary \ref{K.DO} with the frameworks 
of newer stability notions of \cite{Don10} and \cite{Sze11}. 

It would be also interesting to discuss relation with the similar existence statement in \cite[Theorem4]{Sze08} which was for toric case. He used a certain compactness property of the space of convex functions 
on the Delzant polytope (for which he refers to \cite[Proposition 5.2.6]{Don02}). 

\section{Parameter space}\label{sec.bld}

\subsection{Review of spherical building}\label{ss.bld}

See \cite{Tit74}, \cite{AB08} for more detailed accounts for the materials 
presented in this subsection. In this subsection, $G$ is assumed to be an arbitrary 
reductive algebraic group. 

\begin{Def}
The {\it spherical building} $\Delta(G)$ of $G$ is an abstract simplicial complex 
whose vertices represent maximal (proper) parabolic subgroups and faces represent 
finite set of maximal parabolic subgroups whose intersection is again parabolic. 
\end{Def}

Then, it is known that the following holds. 

\begin{Prop}[{cf.\ \cite[Chapter 2, section 2]{Mum65}}]\label{Mum.bld}
The rational points of spherical building of $G$ has natural correspondence with 
classes of one parameter subgroups as follows. We denote the center subgroup 
of $G$ as $Z$ and ``$1$-ps" stands for one parameter subgroup. 
$$|\Delta(G)|_{\mathbb{Q}}\cong\{ 1\mbox{-ps } 
\lambda\colon \mathbb{G}_m\rightarrow G 
\mbox{ whose image is not in } Z \}/\sim. $$
\noindent
Here, the equivalence relation is generated by the following three relations: 
\begin{enumerate}
\item $\lambda^m \sim \lambda^n$ for any $m,n\in \mathbb{Z}_{>0}$ 
\item $\lambda\sim p\lambda p^{-1}$ if 
$p\in (P(\lambda))_k$, where $P(\lambda)$ is a parabolic subgroup of $G$ 
whose $k$-valued points are: \\ 
$\{\gamma \in (G)_k \mid \lambda(t)\gamma\lambda(t^{-1}) 
\mbox{ has a specialization in } G \mbox{ when } t \mbox{ specializes to } 0 \}$. 
\item $\lambda \sim \lambda \xi$ where $\xi$ is a one parameter subgroup 
of $Z$. 
\end{enumerate}

\end{Prop}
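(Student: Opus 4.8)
The plan is to prove the bijection by fixing a maximal torus, establishing the correspondence on a single apartment via the combinatorics of the root system, and then using the conjugation relation (ii) to glue the apartments into the whole building. First I would recall that any one parameter subgroup $\lambda\colon \mathbb{G}_m\to G$ has image a torus, hence factors through some maximal torus $T$; thus it suffices to understand cocharacters valued in $T$, that is, elements of the lattice $X_*(T)$. This reduction is harmless because every class in the right-hand side has a representative landing in a chosen $T$.

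Next I would identify the subcomplex (apartment) $\Sigma_T\subset \Delta(G)$ consisting of those parabolics that contain $T$, and show that its rational points are parametrized by $X_*(T)_{\mathbb{Q}}$ modulo relations (i) and (iii). Concretely, the root hyperplanes $\{\langle \alpha,\,-\,\rangle=0\}$ subdivide the real vector space $(X_*(T)/X_*(Z))\otimes\mathbb{R}$ into the cones of the Weyl fan; each such cone corresponds to a parabolic $P\supseteq T$ via $P=P(\lambda)$ for $\lambda$ in its relative interior, with the maximal parabolics $P_i$ (Levi omitting the node $i$) corresponding to the rays, hence to the vertices of $\Sigma_T$. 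Relation (iii) collapses the central directions $X_*(Z)$ and relation (i) collapses positive rational rescaling, so a class is recorded by the face of the fan containing $\lambda$ together with the pairings $\langle \alpha_i,\lambda\rangle$ against the simple roots of that face; suitably normalized to sum to $1$, these pairings are exactly the rational barycentric coordinates of a point of $|\Sigma_T|_{\mathbb{Q}}$, which gives the bijection on one apartment.

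It then remains to glue, and I would show that relation (ii) is precisely what identifies cocharacters representing the same point across different apartments. The key structural input is that $P(\lambda)$ is the stabilizer in $G$ of the simplex of $\Delta(G)$ determined by $\lambda$ and that it acts on the building fixing that simplex; conjugating $\lambda$ by $p\in P(\lambda)_k$ therefore leaves the associated point unchanged, since $P(p\lambda p^{-1})=pP(\lambda)p^{-1}=P(\lambda)$. For the converse I would invoke the building axiom that any two simplices lie in a common apartment: if $\lambda,\mu$ determine the same point, choose a maximal torus whose apartment contains representatives of both, apply the single-apartment bijection of the previous step, and transport back by a parabolic conjugation. Surjectivity is then immediate, as every rational point lies in some $\Sigma_T$ and hence arises from a cocharacter of $T$.

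The main obstacle I anticipate is this gluing, namely verifying that the relation generated by (i)--(iii) identifies neither too much nor too little. The delicate point is that relation (ii) permits conjugation only by $P(\lambda)$ itself, not by an arbitrary parabolic, so one must check that two cocharacters with the same associated parabolic and the same image in $X_*(T)_{\mathbb{Q}}/X_*(Z)_{\mathbb{Q}}$ up to scaling are genuinely $P(\lambda)$-conjugate. This rests on the transitivity of $P(\lambda)$ on the maximal tori it contains and on the Levi--unipotent decomposition of $P(\lambda)$, i.e. the standard structure theory of parabolic subgroups, which is the technical heart of Mumford's treatment in \cite[Chapter 2, section 2]{Mum65}.
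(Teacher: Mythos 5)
Your argument is correct and follows essentially the same route as the paper's source: the paper gives no proof of this proposition, deferring entirely to \cite[Chapter 2, section 2]{Mum65}, and your strategy (apartment-by-apartment identification via the Weyl fan modulo relations (i) and (iii), then gluing across apartments by $P(\lambda)$-conjugacy of maximal tori to account for relation (ii)) is precisely Mumford's. The only phrasing to adjust is the remark that every class has a representative landing in ``a chosen $T$'': since relations (i)--(iii) do not allow conjugation by all of $G$, a class of $\lambda$ only meets tori contained in $P(\lambda)$, so this should read ``some maximal torus''; your actual argument never uses the stronger claim.
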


Note that by the relation ${\rm (iii)}$, 
and not thinking one parameter subgroup with image in $Z$, 
we are essentially considering semisimple part. 
Of course, fixing a maximal torus $T$, we have a natural map 
$|\Delta(T)|\rightarrow|\Delta(G)|$ which is indeed injective. 
Thus, $|\Delta(G)|$ is a union of 
$S(T,\mathbb{Q}):=
({\rm Hom}_{\rm alg.grp}(\mathbb{G}_m,T)\otimes_{\mathbb{Z}}\mathbb{Q})
/\mathbb{Q}_{>0}$ 
which is dense in 
$S(T,\mathbb{R}):=
({\rm Hom}_{\rm alg.grp}(\mathbb{G}_m,T)\otimes_{\mathbb{Z}}\mathbb{R})
/\mathbb{R}_{>0}$ (apartments) 
naturally homeomorphic to sphere. 

It is not compact in the usual topology (see \cite[Chapter 12]{AB08} for 
compatible metric structure and its developments) but it is compact if we introduce a new topology as follows. 

Note that a variety $G/N(T)$ parametrizes set of maximal tori where $N(T)$ 
denotes the normalizer (algebraic) subgroup of a fixed maximal torus $T$ of $G$. 
Thus, we have a natural map 
$\phi\colon S(T,\mathbb{R})\times (G/N(T))_k \rightarrow |\Delta(G)|$. 
Using this, we can define our new topology as its associated quotient (compact) topology 
of the product of the Euclidean topology on $S(T,\mathbb{R})$ and the Zariski topology 
on $(G/N(T))_k$, via $\phi$. We call it {\it stuck topology} of $|\Delta(G)|$ (or 
$|\Delta(G)|_{\mathbb{Q}}$, defined by restriction) in this notes. 
We fix the above notation from now on. 

\subsection{Proof of Theorem \ref{tc.bld}}\label{pr.22}

Note that we are interested in the case $G={\rm GL}(H^0(X,L))$ 
which we assume in this subsection. 

As Proposition \ref{Mum.bld} shows, 
the rational points of the spherical building $|\Delta(G)|_{\mathbb{Q}}$ represent 
the classes of one parameter subgroups of ${\rm GL}(H^0(X,L))$ 
with respect to the equivalence relation generated by {\rm (i), (ii), (iii)} 
which are nontrivial.  

On the other hand, let us recall that each test configuration 
$(\mathcal{X},\mathcal{L})$ with very ample $\mathcal{L}$ of exponent $1$ 
and projection morphism 
$p\colon \mathcal{X}\rightarrow \mathbb{A}^1$ is 
isomorphic to the DeConcini-Procesi family of a one parameter subgroup 
(\cite[Proposition 3.7]{RT07}). 
More precisely, if we fix $\mathbb{G}_m$-equivariant trivialization 
$f\colon p_*\mathcal{L}\cong \mathcal{O}^{\oplus (M+1)}$ of 
$p_*\mathcal{L}$ then we can take a one parameter subgroup  $\lambda_f$ 
whose corresponding DeConcini-Procesi family is isomorphic to the original 
$(\mathcal{X},\mathcal{L})$. 

Let us fix a $\mathbb{G}_m$-equivariant trivialization $f$ 
and take associated eigenspace decomposition of $H^0(X,L)$ as 
$H^0(X,L)=\oplus_{i}V_i$ where the eigenvalue of the action on $V_i$ is $t^{w_i}$. 
Then, as explained in \cite[a few lines after Lemma 2]{Don05}, any other choice is given by $p(t)\circ f$ where $p\in (P(\lambda))_k$ 
and $p(t)$ is defined by $p(t)|_{{\rm End}(V_i,V_j)}=t^{w_{j}-w_{i}}p|_{{\rm End}(V_i,V_j)}$. 
Note that $\lambda_{p(t)\circ f}=p\lambda_{f}p^{-1}$. 

Thus, each test configuration is an equivalence class of one parameter subgroups 
divided by the relation ${\rm (ii)}$. And it is also easy to see that the relation 
${\rm (i)}$ (resp.\ ${\rm (iii)}$) corresponds to the base change, i.e. P-pre-equivalence 
(resp.\ R-equivalence) introduced in Definition \ref{eq.tc}. 
Hence, test degenerations with very ample polarization of exponent $1$ 
correspond to one parameter subgroups of 
${\rm GL}(H^0(X,L))$ modulo ${\rm (i), (ii), (iii)}$ so that they correspond to 
rational points of the building $|\Delta(G)|_{\mathbb{Q}}$. 

\subsection{Almost trivial test configurations}\label{patho.ss}

In \cite{LX11}, it is recently pointed out that 
in the definition of K-stability, we should allow some non-trivial but ``almost 
trivial" test configurations having GIT weights to vanish, 
or simply dismiss those 
test configurations of a specific type from our mind. 
Rigorously speaking, it is characterized by the following condition. 
Indeed, those should have vanishing Donaldson-Futaki invariants by 
the formula \cite{Wan08}, \cite{Od11}. 

\begin{Def}[{\cite[Definition 1]{Stp11}}]
A test configuration $(\mathcal{X},\mathcal{L})$ is said to be {\it almost trivial} 
\footnote{It is called {\it trivial in codimension $2$} in \cite{Stp11} but 
I use this terminology by the following reason. That is, as Stoppa taught me, 
the way of using phrase as ``$\cdots$ in codimension $2$" may be familiar to 
differential geometers but it is 
not common for algebro-geometers and may well be confusing for them. 
The meanings are completely the same. }
if $\mathcal{X}$ is $\mathbb{G}_m$-equivariantly isomorphic to the product 
test configuration, away from a closed subscheme of codimension at least $2$. 
\end{Def}

By taking the fact that they are not necessarily trivial (cf.\ \cite[Example 1]{LX11}) 
into account, K-stability is slightly reformulated as follows. 

\begin{Def}[{cf.\ \cite[subsection 2.2]{LX11}, \cite[Definition 2]{Stp11}}]\label{s.Kst}
A polarized variety $(X,L)$ is K-stable if ${\rm DF}(\mathcal{X},\mathcal{L})>0$ 
for all test configurations {\it which are not almost trivial}. 
\end{Def}

Now, we give a partial characterization of  ``almost triviality" from 
different points of view as follows. We note that the converse does not hold in 
general. 






\begin{Prop}\label{p.tc}
Assume $X$ is embedded into a projective space $\mathbb{P}(H^0(X,L^{\otimes r}))$ 
by the complete linear system $|L^{\otimes r}|$ with some $r\in \mathbb{Z}_{>0}$. 
Take a diagonalization of the $\mathbb{G}_m$-action $\lambda$ on $H^0(X,L^{\otimes r})$ by the eigenvectors $X_0,\cdots,X_c,X_{c+1},\cdots,X_N$ where $\mathbb{G}_m$ 
acts on $X_i$ by $t^{a_i}$. Assume that $a_0=\cdots=a_{c}<a_{c+1}\leq\cdots\leq a_N$. 

Under the assumptions above, if the associated test configuration is 
almost trivial, then $c>\dim (X)$ holds and the linear subspace $V(X_0,\cdots,X_c)$ of $\mathbb{P}(H^0(X,L^{\otimes r}))$, where $X_0,\cdots,X_c$ vanish, 
does not intersect with $X$, i.e., $V(X_0,\cdots,X_c)\cap X
=\emptyset$. 
\end{Prop}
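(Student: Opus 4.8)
The plan is to compute the central fibre of the test configuration explicitly as a flat limit and to read off both assertions from its geometry. By the De Concini--Procesi description recalled in subsection~\ref{pr.22} (see also \cite[Proposition 3.7]{RT07}), the total space is the Zariski closure of $\{(t,\lambda(t)\cdot x):t\in\mathbb{G}_m,\ x\in X\}$ inside $\mathbb{A}^1\times\mathbb{P}(H^0(X,L^{\otimes r}))$, and the central fibre $\mathcal{X}_0$ is the flat limit $\lim_{t\to 0}\lambda(t)\cdot X$, cut out by the initial ideal $\mathrm{in}_\lambda(I_X)$. Write $n=\dim X$, let $W=\langle X_0,\dots,X_c\rangle\cong\mathbb{P}^c$ be the minimal-weight coordinate subspace, let $Z=V(X_0,\dots,X_c)$ be the complementary coordinate subspace (so $W\cap Z=\emptyset$), and let $\pi\colon\mathbb{P}(H^0)\setminus Z\to W$ be the linear projection. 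Factoring $t^{a_0}$ out of $\lambda(t)$ and using $a_0=\cdots=a_c<a_{c+1}\leq\cdots$, one checks that $\lim_{t\to 0}\lambda(t)\cdot x=\pi(x)\in W$ for every $x\in X\setminus Z$; hence $\mathcal{X}_0$ contains the irreducible $n$-dimensional set $\overline{\pi(X\setminus Z)}\subseteq W$. First I would record what almost triviality buys us: since $\mathcal{X}\cong X\times\mathbb{A}^1$ away from codimension $\geq 2$ and $\mathcal{X}_0$ is a Cartier divisor, $\mathcal{X}_0$ is isomorphic to $X$ away from a subset of codimension $\geq 1$ in $\mathcal{X}_0$; as $p$ is flat, $\mathcal{X}_0$ is pure of dimension $n$, and this forces $\mathcal{X}_0$ to be \emph{irreducible and generically reduced}, with $\deg\mathcal{X}_0=\deg X$.

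The heart of the argument is the emptiness $X\cap Z=\emptyset$. Since $\overline{\pi(X\setminus Z)}\subseteq W$ is an irreducible closed subset of the same dimension $n$ as the irreducible $\mathcal{X}_0$, irreducibility gives $\mathcal{X}_0=\overline{\pi(X\setminus Z)}\subseteq W$, whence $\mathcal{X}_0\cap Z\subseteq W\cap Z=\emptyset$. On the other hand, if some $p\in X\cap Z$ existed, then $\lambda(t)\cdot p\in Z$ for all $t$ (as $Z$ is $\lambda$-invariant), so $q:=\lim_{t\to 0}\lambda(t)\cdot p$ lies in $Z$, while $(0,q)=\lim_{t\to 0}(t,\lambda(t)\cdot p)\in\mathcal{X}$ shows $q\in\mathcal{X}_0$; thus $\mathcal{X}_0\cap Z\neq\emptyset$, a contradiction. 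Hence $X\cap Z=\emptyset$. The mechanism is transparent in the example $X=\{X_0X_3=X_1X_2\}\subseteq\mathbb{P}^3$ with $\lambda=\mathrm{diag}(1,1,1,t)$: here $X\cap Z=\{[0:0:0:1]\}\neq\emptyset$ and the limit $\{X_0X_3=0\}$ is a union of two planes, so $\mathcal{X}_0$ is reducible and the configuration is not almost trivial.

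Finally I would deduce $c>n$. With $X\cap Z=\emptyset$ the projection $\pi|_X\colon X\to W$ is a finite morphism onto $\pi(X)=\mathcal{X}_0\subseteq\mathbb{P}^c$, giving $c\geq n$ at once, and generic reducedness forces $\pi|_X$ to have degree $1$, i.e.\ to be birational onto $\mathcal{X}_0$. If $c=n$ held, then $\mathcal{X}_0$ would be an $n$-dimensional subvariety of $W=\mathbb{P}^n$, hence all of $\mathbb{P}^n$, a linear space of degree $1$; comparing with $\deg\mathcal{X}_0=\deg X$ would give $\deg X=1$, i.e.\ $X$ a linearly embedded $\mathbb{P}^n$, for which the configuration is genuinely trivial and thus excluded. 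Therefore $c>n=\dim X$.

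I expect the main obstacle to be the step $X\cap Z=\emptyset$, more precisely the clean extraction from ``almost trivial'' of the fact that $\mathcal{X}_0$ is irreducible and generically reduced: one must argue carefully that removing a codimension $\geq 1$ locus from the pure $n$-dimensional $\mathcal{X}_0$ cannot change its number of top-dimensional components, so that isomorphism with an open subset of the irreducible $X$ indeed forces global irreducibility. The degree bookkeeping needed to upgrade $c\geq n$ to the strict inequality, ruling out the linear case, is the only other point requiring care.
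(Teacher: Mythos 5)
There is a genuine gap in your argument for $X\cap Z=\emptyset$, and it is not the one you flagged. Extracting irreducibility and generic reducedness of $\mathcal{X}_0$ from almost triviality is fine: a closed subset of codimension $\geq 2$ in $\mathcal{X}$ meets the pure $n$-dimensional $\mathcal{X}_0$ in codimension $\geq 1$, so every top-dimensional component of $\mathcal{X}_0$ survives in the open locus isomorphic to an open subset of $X$. The real problem is the unjustified claim that $\overline{\pi(X\setminus Z)}$ is $n$-dimensional. The restriction $\pi|_{X\setminus Z}$ is generically finite exactly when its generic fibre $X\cap(\langle x,Z\rangle\setminus Z)$ is finite; if that fibre were positive-dimensional its closure would have to meet the hyperplane $Z\subset\langle x,Z\rangle$, so generic finiteness can only fail when $X\cap Z\neq\emptyset$ --- precisely the situation you are trying to rule out, which makes the step circular. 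Concretely, take $X=V(X_0X_3-X_1X_2)\subset\mathbb{P}^3$ and $\lambda=\mathrm{diag}(1,1,t,t)$, so that $c=1$ and $Z=V(X_0,X_1)$ is a line contained in $X$. Then $\lambda$ preserves $X$, so $\mathcal{X}_0=X$ is irreducible, reduced and of degree $\deg X$: every property you extract from almost triviality holds, yet $\overline{\pi(X\setminus Z)}=W\cong\mathbb{P}^1$ is only $1$-dimensional, $\mathcal{X}_0\not\subseteq W$, and $X\cap Z\neq\emptyset$. So the intermediate properties of $\mathcal{X}_0$ that you actually use genuinely do not imply $X\cap Z=\emptyset$; as written, your argument would ``prove'' a false statement for this configuration (which of course is a product with nontrivial action, hence not almost trivial --- but nothing in your proof uses that).

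To repair the step you must use more of almost triviality than the scheme structure of the central fibre: the equivariant isomorphism with the trivial product forces the induced $\mathbb{G}_m$-action on a dense open subset of $\mathcal{X}_0$, hence on $(\mathcal{X}_0)_{\mathrm{red}}$, to be trivial. Therefore $(\mathcal{X}_0)_{\mathrm{red}}$ lies in the fixed locus of $\lambda$ on $\mathbb{P}(H^0(X,L^{\otimes r}))$, which is the disjoint union of the projectivized weight subspaces; being irreducible and containing the nonempty set $\pi(X\setminus Z)\subseteq W$, it lies in $W$, hence is disjoint from $Z$. Your observation that any $p\in X\cap Z$ produces $\lim_{t\to 0}\lambda(t)\cdot p\in\mathcal{X}_0\cap Z$ then gives the contradiction with no dimension count on $\overline{\pi(X\setminus Z)}$ needed. (The paper instead argues by contraposition via a degree count for the linear projection; your route, once patched in this way, is of comparable length.) The second half of your argument, deducing $c>\dim X$ once $X\cap Z=\emptyset$ is in hand, is essentially the paper's dimension-and-degree comparison and is fine, apart from the somewhat loose dismissal of the degenerate case $\deg X=1$.
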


\begin{proof}

Let us assume $V(X_0,\cdots,X_c)\cap X
\neq \emptyset$ and prove that the associated test configuration $\mathcal{X}_\lambda$ is not almost trivial. 
Take a closed point $x$ of $V(X_0,\cdots,X_c)\cap X$, then linear projection 
from $x$ is not birational, i.e. degree should be at least $2$ as $x\in X$. 
Thus, the resulting test configuration should not have a central fiber which is birational 
to original $X$. 

On the other hand, if $c\le \dim(X)$, the linear projection from $V(X_0,\cdots,X_c)$ 
to the corresponding projective space is surjective. If it is generically finite, $c=\dim(X)$ follows from it by comparing dimensions. However, in that case, it should not be birational because $X$ is non-degenerate in $\mathbb{P}(H^0(X,L^{\otimes r}))$. This completes the proof. 
\end{proof}

In other words, if a test configuration $(\mathcal{X},\mathcal{M})$ is almost trivial then 
its associated flag ideal $\mathcal{J}\subset \mathcal{O}_{X\times \mathbb{A}^1}$ 
(cf.\ \cite{Od11}) is of the form $(t^N)$ and the natural dominating morphism from  $\mathcal{B}:=Bl_{\mathcal{J}}(X\times \mathbb{A}^1)(\cong (X\times \mathbb{A}^1))$ to $\mathcal{X}$ is isomorphic 
away from a closed subscheme of $\mathcal{X}$ with codimension at least $2$. 
Of course, this is a characterization of the almost triviality. 
Thus, given a test configuration which is not almost trivial, the associated flag ideal  $\mathcal{J}$ is not of the form $(t^N)$ or, if it is of such a form, $\mathcal{B}\rightarrow \mathcal{X}$ is not isomorphic in codimension $1$. For the latter case, we have $\DF(\mathcal{X},\mathcal{M})>0$ by the arguments of \cite[section 5]{RT07} or the formula 
for the Donaldson-Futaki invariant \cite{Od11}. 

On the other hand, once a semi test configuration of 
blow up type $(\mathcal{B}:=Bl_{\mathcal{J}}(X\times \mathbb{A}^1),\mathcal{L}^{\otimes r}(-E))$ is given, we can construct a contracted (ample) test 
configuration $(\Proj 
\oplus_{k\ge 0} H^0(X\times \mathbb{A}^1,\mathcal{J}^k p_1^* L^{\otimes rk}), \mathcal{O}(1))$ 
with obviously the same Donaldson-Futaki invariant. 

In sum, we have the following, which is a corrected proposition of \cite{Od11}. 

\begin{Cor}[{cf.\ \cite[Corollary 3.11]{Od11}}]\label{usefulness}
Assume $X$ is Gorenstein in codimension $1$. 
Then, a polarized variety $(X,L)$ is K-stable (\underline{in the sense of Definition \ref{s.Kst}}) 
if and only if for all semi test configurations of the blow up type in \cite{Od11} 
$($i.e.,  $(\mathcal{B}=Bl_{\mathcal{J}}(X\times \mathbb{A}^{1}), \mathcal{L}^{\otimes{r}}(-E))$ with $r\in \mathbb{Z}_{>0}$, flag ideal $\mathcal{J}$ \underline{which is not of the form $(t^N)$} and $\mathcal{B}$ Gorenstein in codimension $1$ $)$, the Donaldson-Futaki invariant is positive. 
\end{Cor}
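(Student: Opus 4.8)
The plan is to deduce the equivalence directly from the two correspondences already set up above: namely the assignment, from \cite{Od11}, between arbitrary test configurations and semi test configurations of the blow up type (via the flag ideal $\mathcal{J}$), and the contraction of a blow up type semi test configuration to an ample test configuration. Both passages preserve the Donaldson-Futaki invariant, so the whole problem reduces to checking that the objects excluded on each side correspond under these operations. The mechanism for that matching is precisely the characterization of almost triviality recalled before the statement: a test configuration is almost trivial exactly when its flag ideal is of the form $(t^N)$ and the dominating morphism $\mathcal{B}\to\mathcal{X}$ is an isomorphism in codimension $1$.

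First I would treat the forward direction, assuming $(X,L)$ is K-stable in the sense of Definition \ref{s.Kst}. Given a semi test configuration of blow up type $(\mathcal{B}=Bl_{\mathcal{J}}(X\times\mathbb{A}^1),\mathcal{L}^{\otimes r}(-E))$ with $\mathcal{J}$ not of the form $(t^N)$ and $\mathcal{B}$ Gorenstein in codimension $1$, I would pass to the contracted ample test configuration $(\Proj\oplus_{k\ge 0}H^0(X\times\mathbb{A}^1,\mathcal{J}^k p_1^*L^{\otimes rk}),\mathcal{O}(1))$, which has the same Donaldson-Futaki invariant. Since its associated flag ideal is again $\mathcal{J}\ne(t^N)$, the characterization shows this test configuration is not almost trivial, so K-stability forces its invariant, and hence $\DF$ of the original semi test configuration, to be positive.

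For the converse I would take an arbitrary test configuration $(\mathcal{X},\mathcal{M})$ that is not almost trivial and form its associated flag ideal $\mathcal{J}$ and blow up $\mathcal{B}$, then split into two cases according to the characterization. If $\mathcal{J}$ is not of the form $(t^N)$, then $(\mathcal{B},\mathcal{L}^{\otimes r}(-E))$ is exactly a semi test configuration of the permitted type, the hypothesis that $X$ is Gorenstein in codimension $1$ guaranteeing the same for $\mathcal{B}$, so the positivity of its Donaldson-Futaki invariant is given by assumption, and this equals $\DF(\mathcal{X},\mathcal{M})$. If instead $\mathcal{J}=(t^N)$, then, since the test configuration is not almost trivial, the morphism $\mathcal{B}\to\mathcal{X}$ fails to be an isomorphism in codimension $1$, and in this remaining case positivity of $\DF(\mathcal{X},\mathcal{M})$ follows from the arguments of \cite[section 5]{RT07} or from the Donaldson-Futaki formula of \cite{Od11}, as already noted.

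The main obstacle I expect is the bookkeeping of the Gorenstein-in-codimension-$1$ hypotheses, so that the canonical-sheaf contributions in the Donaldson-Futaki formula are defined and transform compatibly under blow up and contraction, together with confirming that the flag-ideal assignment and its inverse genuinely preserve $\DF$ in the singular setting. These are the points where one must lean on the formalism of \cite{Od11} rather than on soft arguments; the remainder of the proof is essentially assembling the pieces already displayed in the discussion preceding the statement.
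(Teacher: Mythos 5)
Your proposal is correct and follows essentially the same route as the paper: the paper's "proof" is exactly the discussion preceding the statement (the flag-ideal characterization of almost triviality, the case split according to whether $\mathcal{J}=(t^N)$ with the \cite[section 5]{RT07}/\cite{Od11} argument for the residual case, and the DF-preserving contraction of a blow-up type semi test configuration), which you have simply organized into the two implications. No gap to report.
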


\noindent
Originally, the author claimed the corresponding statement for original K-stability 
\cite{Don02} which does not work since he ignored the case $\mathcal{J}=(t^N)$ so that 
the blow up morphism $\mathcal{B}\rightarrow X\times \mathbb{A}^1$ is trivial. 
The proof works for proving the above statement. I apologize for this inaccuracy. 

Thus, with this slightly modified K-stability notion and Corollary \ref{usefulness}, 
the results of \cite{Od11} the sequels are justified in the naturally corrected form, 
or partially work without change. 

We explain a locus of the spherical building, inside which these pathologies occur. 

\begin{Def}
Assume $T$ is a maximal torus of $G={\rm GL}(H^0(X,L^{\otimes r}))$ 
and take the corresponding diagonalizing basis $X_0,\cdots,X_N$ of 
$H^0(X,L^{\otimes r})$. In the rational sphere $S(T,\mathbb{Q}):=
({\rm Hom}_{\rm alg.grp}(\mathbb{G}_m,T)\otimes \mathbb{Q})/\mathbb{Q}_{>0}$, 
we define a subset 
$$
S'(T,\mathbb{Q}):=
\{(a_i) \mid V(\{ X_j \mid a_j= {\rm min}_i \{a_i \}\}) \cap X=\emptyset \}. 
$$

\noindent
Note that $S'(T,\mathbb{Q})$ has dense complement 
as for each $(a_i)\in S'(T,\mathbb{Q})$ we have 
$\# \{ j \mid a_j= {\rm min}_i \{a_i \}\} >1$. It is because the corresponding linear subspace should not be of codimension $1$ as $X\subset \mathbb{P}(H^0(X,L^{\otimes r}))$ is non-degenerate. 

\end{Def}

\begin{Cor}[{of Proposition \ref{p.tc}}]
In the geometric realization of the spherical building $\Delta(G)$, 
almost trivial test degenerations are parametrized by $\phi(\cup_{T} S'(T,\mathbb{Q}))$ whose complement $($in $|\Delta(G)|_{\mathbb{Q}}$$)$ is dense. 
\end{Cor}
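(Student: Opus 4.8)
The plan is to run everything through the dictionary established in Theorem \ref{tc.bld}: a test degeneration with very ample polarization of exponent $1$ is the same datum as a rational point of $|\Delta(G)|_{\mathbb{Q}}$, that is, a class of one parameter subgroup modulo the relations (i), (ii), (iii) of Proposition \ref{Mum.bld}. Given such a class I would first choose a maximal torus $T$ diagonalizing a representative $\lambda$, obtaining weights $(a_i)$ on a diagonalizing basis $X_0,\ldots,X_N$, hence a point of the apartment $S(T,\mathbb{Q})$ whose image under $\phi$ is the given building point. The subspace $V(\{X_j\mid a_j=\min_i a_i\})$ is intrinsic to $\lambda$: it is the vanishing locus of the minimal-weight eigenvectors, independent of the basis chosen inside each eigenspace, and the condition $V\cap X=\emptyset$ is unchanged by rescaling the weights (relation (i)) and by twisting with a central one parameter subgroup (relation (iii)), both of which only scale or shift all $a_i$ simultaneously and therefore preserve which coordinates are minimal. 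Conjugation (relation (ii)) merely replaces $T$ by another maximal torus, which is exactly why the union $\cup_T S'(T,\mathbb{Q})$ over all maximal tori, glued by $\phi$, is the correct object. So membership in $\phi(\cup_T S'(T,\mathbb{Q}))$ descends to a well-defined property of points of the building.

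With this set up, the inclusion of the almost trivial locus into $\phi(\cup_T S'(T,\mathbb{Q}))$ is immediate from Proposition \ref{p.tc}. After arranging $a_0=\cdots=a_c<a_{c+1}\leq\cdots\leq a_N$, almost triviality forces $V(X_0,\ldots,X_c)\cap X=\emptyset$, and since $X_0,\ldots,X_c$ are precisely the minimal-weight coordinates, this is exactly the defining condition of $S'(T,\mathbb{Q})$. I would stress here that, because the converse of Proposition \ref{p.tc} fails, this is only an inclusion: the phrase ``parametrized by'' in the statement should be read as \emph{locating} the almost trivial degenerations inside this locus, i.e.\ identifying the sub-locus of the building inside which the pathologies of subsection \ref{patho.ss} can occur, rather than as a bijection.

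For the density of the complement I would argue one apartment at a time, recovering the observation already recorded in the Note after the definition of $S'$. On a fixed $S(T,\mathbb{Q})$, a weight vector $(a_i)$ can lie in $S'(T,\mathbb{Q})$ only if $\#\{j\mid a_j=\min_i a_i\}>1$: for if the minimum were attained by a single coordinate $X_{j_0}$, then $V$ would be the hyperplane $\{X_{j_0}=0\}$, which meets the positive-dimensional projective variety $X$ (non-degeneracy of the embedding in particular prevents any coordinate hyperplane from containing $X$), so $V\cap X\neq\emptyset$ and $(a_i)\notin S'(T,\mathbb{Q})$. Hence $S'(T,\mathbb{Q})$ is contained in the wall locus $\bigcup_{i<j}\{a_i=a_j\}$, a finite union of codimension-one great subspheres that is nowhere dense in $S(T,\mathbb{Q})$ for the Euclidean topology. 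Its complement, the locus of unique minimum, is open and dense in each apartment; as the building is covered by its apartments, the union of these complements is dense in $|\Delta(G)|_{\mathbb{Q}}$.

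The hard part will not be the density, which is a clean count of when the minimal weight is attained uniquely, but rather making the two halves fit together honestly: one must verify that the assignment $\lambda\mapsto(V\cap X=\emptyset?)$ genuinely factors through all three relations of Proposition \ref{Mum.bld} and is compatible with $\phi$ across overlapping apartments, so that $\phi(\cup_T S'(T,\mathbb{Q}))$ is a bona fide subset of the building and not merely of the disjoint union of apartments. This bookkeeping, together with tracking the contravariant convention for $\mathbb{P}(-)$ and the precise relation between minimal weights and the limit of $\lambda(t)\cdot[x]$ as $t\to 0$, is where care is needed; once it is in place, the corollary is the formal conjunction of Proposition \ref{p.tc} (the inclusion) and the unique-minimum argument (the dense complement).
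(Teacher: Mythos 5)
Your argument is correct and follows the paper's own route: the inclusion of the almost trivial locus into $\phi(\cup_T S'(T,\mathbb{Q}))$ is exactly the content of Proposition \ref{p.tc}, and the density of the complement is precisely the observation recorded in the Note after the definition of $S'(T,\mathbb{Q})$ (a unique minimal weight would make $V$ a coordinate hyperplane, which necessarily meets the positive-dimensional non-degenerate $X$). Your additional care about well-definedness under the relations (i)--(iii) and your reading of ``parametrized by'' as an inclusion (since the converse of Proposition \ref{p.tc} fails) are both appropriate glosses on what the paper leaves implicit.
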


\section{Weight function}\label{s.wt}

We give the definitions of normalized Chow weights and optimal destabilizing test degeneration as follows. 
See \cite[Chapter 1, 2]{Mum65} to review basics of GIT weights. 
Assume $G:={\rm SL}(H^0(X,L^{\otimes r}))$ in this section and 
fix a maximal torus $T$ of $G$. 
With a Weyl group invariant norm $|\cdot|$ on 
${\rm Hom}_{\rm alg.grp}(\mathbb{G}_m,T)\otimes_{\mathbb{Z}} \mathbb{R}$, 
denote the normalized GIT weight function by $\nu_T:=\frac{\mu_T}{|\cdot|}$, 
where $\mu_T$ is the usual GIT weight function restricted to the set of 
one parameter subgroups of $T$.  
Recall that $\nu_T\colon |\Delta(T)|_{\mathbb{Q}}\rightarrow \mathbb{R}$ can be naturally extended to $\nu$ on whole $|\Delta(G)|_{\mathbb{Q}}$ by taking conjugate which sit in $T$ for each one parameter subgroup. Note that 
$\Delta(G)=\Delta({\rm SL}(H^0(X,L^{\otimes r})))$ can be and will be 
naturally identified with 
$\Delta({\rm GL}(H^0(X,L^{\otimes r})))$ by their definitions. 

\begin{Def}
For a given embedded projective variety $[X\subset \mathbb{P}]$ and 
one parameter subgroup $\lambda\colon \mathbb{G}_m\rightarrow 
{\rm SL}(H^0(\mathbb{P},\mathcal{O}(1)))$, its {\it normalized Chow weight} 
means $\nu(\bar{\lambda};{\rm Chow}(X\subset \mathbb{P}))$, 
where ${\rm Chow}(X\subset \mathbb{P})$ means the associated Chow point. 
\end{Def}

One of the main issue of this notes is the existence of the following object. 

\begin{Def}
Assume $[X\subset \mathbb{P}]$ is Chow unstable. 
Then, its {\it maximally destabilizing test degeneration} means 
the test degeneration which corresponds to a point of $|\Delta(G)|_{\mathbb{Q}}$ 
with representing one parameter subgroup in ${\rm SL}(H^0(X,L^{\otimes r}))$ 
which have the minimum (so negative) normalized Chow weight. 
\end{Def}

\noindent
Note that Corollary \ref{C.DO} which states its existence and uniqueness simply 
follows from \cite{Rou78}, \cite{Kem78} combined with Theorem \ref{tc.bld} 
of this notes. 

Now, we briefly explain the continuity property of normalized Chow weights (Theorem \ref{Cwt}). 

\begin{proof}[Proof of Theorem \ref{Cwt}]

Recall the following Lemma due to Mumford. 

\begin{Lem}[{cf.\ \cite[Chapter 2, Proposition 2.14]{Mum65}}]
With finite linear functions on ${\rm Hom}_{{\rm alg.grp}}(\mathbb{G}_m,T)\otimes_{\mathbb{Z}} \mathbb{R}$ with rational coefficients $l_1,\cdots,l_m$, 
and a map $I\colon G\rightarrow \{$ non-empty subsets of $\{1,\cdots,m\}\}$, 
the weight function $\mu$ is of the form: 
$$
\mu_{gTg^{-1}}(\lambda)={\rm max}\{l_i(\bar{\lambda}) \mid i\in I(g)\}. 
$$
\noindent
Here, $I$ satisfies ``lower semicontinuity" in the sense that for any $J\subset 
\{1,\cdots,m\}$, $\{g\in G\mid J\subset I(g)\}$ is open. 
\end{Lem}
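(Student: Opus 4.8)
The plan is to unwind the Hilbert--Mumford numerical function into a maximum of linear functions coming from the weights of the ambient representation, so that the index set is the only $g$-dependent datum. Let $V$ be the $G$-representation in which the Chow point $\hat x:=\mathrm{Chow}(X\subset\mathbb{P})$ lives, on which $G$ acts linearly, and let $T$ be the fixed maximal torus. The first step is to invoke the equivariance identity $\mu(\hat x,g\lambda g^{-1})=\mu(g^{-1}\cdot\hat x,\lambda)$, which lets me compute the weight function $\mu_{gTg^{-1}}$, parametrized through conjugation by $g$, as the weight function on the fixed torus $T$ evaluated at the translated point $g^{-1}\cdot\hat x$. Thus I never need to move the torus, only the point.

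Decompose $V=\bigoplus_{i=1}^{m}V_{\chi_i}$ into $T$-weight spaces with pairwise distinct characters $\chi_1,\dots,\chi_m\in X^*(T)$, with projections $\pi_i\colon V\to V_{\chi_i}$, and set $l_i(\bar\lambda):=-\langle\chi_i,\bar\lambda\rangle$. These are finitely many linear functions on $\mathrm{Hom}_{\mathrm{alg.grp}}(\mathbb{G}_m,T)\otimes_{\mathbb{Z}}\mathbb{R}$ whose coefficients are integral, hence rational, because the $\chi_i$ are genuine characters. For any nonzero $v\in V$ and any $\lambda\in\mathrm{Hom}_{\mathrm{alg.grp}}(\mathbb{G}_m,T)$ one has $\lambda(t)\cdot v=\sum_i t^{\langle\chi_i,\lambda\rangle}\pi_i(v)$, so with the standard sign convention the weight is
$$\mu(v,\lambda)=\max\{-\langle\chi_i,\lambda\rangle\mid \pi_i(v)\neq 0\}=\max\{l_i(\bar\lambda)\mid i\in I_v\},$$
where $I_v:=\{i\mid\pi_i(v)\neq 0\}$ is non-empty since $v\neq0$. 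Defining $I(g):=I_{g^{-1}\hat x}$ and combining with the equivariance identity yields exactly $\mu_{gTg^{-1}}(\bar\lambda)=\max\{l_i(\bar\lambda)\mid i\in I(g)\}$, which is the asserted form.

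It remains to verify the lower semicontinuity of $I$. For each index $j$ the assignment $g\mapsto\pi_j(g^{-1}\cdot\hat x)$ is a morphism of varieties $G\to V_{\chi_j}$, being the composition of inversion, the (linear) action map, and a linear projection; hence its non-vanishing locus $U_j:=\{g\mid\pi_j(g^{-1}\hat x)\neq0\}$ is open. For any $J\subset\{1,\dots,m\}$ we then have $\{g\mid J\subset I(g)\}=\bigcap_{j\in J}U_j$, a finite intersection of open sets, so it is open as claimed. The only genuinely delicate point is conceptual rather than computational: one must recognize that a single finite family $\{l_i\}$, namely the $T$-weights of $V$, serves simultaneously for \emph{every} conjugate torus $gTg^{-1}$, so that all the $g$-dependence is concentrated in the combinatorial support $I(g)$. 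Once this is seen, both the piecewise-linear formula and its semicontinuity follow directly from the algebraicity of the action.
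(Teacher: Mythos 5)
Your proof is correct and is essentially the paper's own argument: the paper simply cites Mumford's Proposition 2.14 (equivalently Kempf's ``states''), whose content is exactly your weight-space decomposition of $V$, the equivariance identity $\mu(\hat x, g\lambda g^{-1})=\mu(g^{-1}\hat x,\lambda)$ concentrating all $g$-dependence in the support $I(g)$, and the openness of the non-vanishing loci of the projections $\pi_j(g^{-1}\hat x)$. The only cosmetic point is that $\hat x$ lives in a projective space, so one should fix a nonzero lift to $V$ and note that $I_v$ is independent of that choice.
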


\noindent
Although the last sentence is not explicitly written in Mumford's exposition, 
it is essentially proved there. See also \cite{Kem78}. (He used his own terminology 
``states" which corresponds to our map $I$. )
The continuity of the right hand side implies continuity of normalized 
Chow weights $\nu$ on $|\Delta(G)|_{\mathbb{Q}}$ 
and moreover, the last sentence implies 
lower semi-continuity with respect to the stuck topology. 

\end{proof}

\begin{ack}
The author is grateful to Professor Shigefumi Mori for his helpful advice. 
He also would like to thank Chi Li, Yoshiki Oshima, 
Jacopo Stoppa, Song Sun and Gabor Szekelyhidi 
for their helpful comments. 
The author is partially supported by the Grant-in-Aid for Scientific Research (KAKENHI No.\ 21-3748) and the Grant-in-Aid for JSPS fellows (PD). 
\end{ack}


\begin{thebibliography}{99}

\bibitem[AB08]{AB08}
P.~Abramenko, K.~Brown, 
Buildings -theory and applications-, 
Graduate Texts in Mathematics, vol.\ \textbf{248}, Springer Verlag (1988). 

\bibitem[Don02]{Don02}

S.~Donaldson, 
Scalar curvature and stability of toric varieties, 
J.\ Diff.\ Geom.\ (2002). 

\bibitem[Don05]{Don05}
S.~Donaldson, 
Lower bounds of Calabi functionals, 
J.\ Diff.\ Geom.\ (2005). 

\bibitem[Don10]{Don10}
S.~Donaldson,
Stability, birational transformations and the K\"ahler-Einstein problems, 
arxiv 1007.4220. 


\bibitem[GSZ11]{GSZ11}
T.~L.~Gomez, I.~Sols, A.~Zamora, 
A GIT interpretration of the Harder-Narasimhan filtration, 
arXiv:1112.1886. 

\bibitem[Kem78]{Kem78}
G.~Kempf, 
Instability of Invariant Theory, Ann.\ of Math.\ vol.\ \textbf{108}, 299-316, (1978). 

\bibitem[LX11]{LX11}

C.~Li, C.~Xu, 
Special test configurations and K-stability of $\mathbb{Q}$-Fano varieties, 
arXiv:1111.5398. 

\bibitem[Mum65]{Mum65}
D.~Mumford, 
Geometric Invariant Theory, 
Ergebnisse der Mathematik
und ihrer Grenzgebiete. Springer-Verlag, (1965).

\bibitem[Od11]{Od11}
Y.~Odaka, 
A generalization of the Ross-Thomas slope theory, 
to appear in Osaka J.\ Math.\ 

\bibitem[Rou78]{Rou78}
G.~Rousseau, 
Immeubles spheriques et theorie de invariants,
C.\ R.\ Acad.\ Sci.\ vol.\ \textbf{286}, 247-250, (1978). 

\bibitem[RT07]{RT07}
J.~Ross, R.~Thomas, 
A study of the Hilbert-Mumford criterion for the stability of projective varieties, 
J.\ of Alg.\  Geom.\ vol.\ \textbf{16}, 201-255 (2007). 


\bibitem[Stp11]{Stp11}
J.~Stoppa, 
A note on the definition of K-stability, 
arXiv:1111.5826. 

\bibitem[Sze08]{Sze08}
G.~Szekelyhidi, 
Optimal test-configurations for toric varieties, 
J.\ Diff.\ Geom.\ vol.\ \textbf{80} (2008), 501--523.  

\bibitem[Sze11]{Sze11}
G.~Szekelyhidi, 
Filtrations and Test-configurations, 
arXiv:1111.4986.  

\bibitem[Tia97]{Tia97}
G.~Tian, 
K\"ahler-Einstein metrics with positive scalar curvature, 
Invent.\ Math.\ (1997). 

\bibitem[Tit74]{Tit74}
J.~Tits, 
Buildings of Spherical Type and Finite BN-pairs, 
Lecture Notes in Mathematics vol.\ \textbf{386}, Springer-Verlag, (1974). 

\bibitem[Wan08]{Wan08}

X.~Wang, 
Heights and GIT weights, 
preprint. available on his webpage: \texttt{http://www.math.cuhk.edu.hk/\~{}xiaowei/} 

\end{thebibliography}
\end{document}